\documentclass[a4paper, 11pt]{amsart} 
\usepackage[
  margin=1.9cm,
  includefoot,
  footskip=30pt,
]{geometry}
\usepackage{amsmath,amssymb,amscd}
\usepackage{amsfonts}
\usepackage[english]{babel}
\usepackage[leqno]{amsmath}
\usepackage{amssymb,amsthm}
\usepackage{mathrsfs} 
\usepackage{amscd}
\usepackage{enumerate}
\usepackage{epsfig}
\usepackage{relsize}
\usepackage{layout}
\usepackage[usenames,dvipsnames]{xcolor}
\usepackage[backref=page]{hyperref}
\usepackage{tikz}
\hypersetup{
 colorlinks,
 citecolor=Green,
 linkcolor=Red,
 urlcolor=Blue}
\usepackage[matrix,arrow,tips,curve]{xy}
\input{xy}
\xyoption{all}
\definecolor{darkgreen}{rgb}{0.0, 0.7, 0.0}
\definecolor{purple}{rgb}{0.5, 0.0, 0.5}
\definecolor{red}{rgb}{0.8, 0.2, 0.0}

\newtheorem{thm}{Theorem}[section]
\newtheorem{bthm}{Theorem}
\newtheorem{bcor}{Corollary}
\newtheorem{lemma}[thm]{Lemma}

\newtheorem{prop}[thm]{Proposition}

\newtheorem{claim}[thm]{Claim}

\numberwithin{equation}{section}
\theoremstyle{definition}
\newtheorem{defi}[thm]{Definition}

\theoremstyle{remark}
\newtheorem{remark}[thm]{Remark}

\newcommand{\Z}{\mathbb{Z}}

\def \P{\mathbb{P}}

\def \F{\mathcal F}

\def \E{\mathcal E}
\def \G{\mathcal G}
\def \H{\mathcal H}

\def\O{\mathcal O}

\def\M0{\mathcal M^0}

\newcommand{\rk}{\operatorname{rank}}

\begin{document}

\title[On the existence of polarized varieties without weakly Ulrich bundles]{On the existence of polarized varieties without weakly Ulrich bundles, ordinary instantons with bounded quantum number and supernatural bundles with bounded roots}

\author[O. Genc, A.F. Lopez]{Ozhan Genc and Angelo Felice Lopez*}

\address{\hskip -.43cm Ozhan Genc, Department of Mathematics, TED University, 06420 Ankara, Turkey.\hfill \null  \hskip 3.1cm email {\tt ozhangenc@gmail.com}}

\address{\hskip -.43cm Angelo Felice Lopez, Dipartimento di Matematica e Fisica, Universit\`a di Roma
Tre, Largo San Leonardo Murialdo 1, 00146, Roma, Italy. e-mail {\tt angelo.lopez@uniroma3.it}}

\thanks{*Partially supported by the GNSAGA group of INdAM.}

\thanks{{\it Mathematics Subject Classification}: Primary 14J60. Secondary 14F06.}

\begin{abstract} 
We give an obstruction to the existence of bundles $\E$ on a smooth $n$-dimensional polarized variety such that $\E$ is $a$-regular and $\E^*(K_X+(n+1)H)$ is $b$-regular. In particular, this gives an obstruction to the existence of weakly Ulrich bundles. Using these obstructions and some recent examples of \cite{a1} in dimension $2$, we prove that for any $n \ge 2$ there exist many smooth $n$-dimensional varieties $X$ with a very ample line bundle $H$, such that there is no weakly Ulrich bundle for $(X,H)$, no ordinary instanton bundle with bounded quantum number, no non-ordinary $\mu$-semistable instanton bundle with bounded quantum number and no bundle with supernatural cohomology and bounded roots.
\end{abstract}

\maketitle

\section{Introduction}

On the existence of polarized varieties without weakly Ulrich bundles, ordinary instantons with bounded quantum number and supernatural bundles with bounded roots

Let $k$ be an algebraically closed field. One of the most classical ways of describing the geometry of a variety $X \subset \P^N$ over $k$ of codimension $c$, is to associate to $X$ the hypersurface in $\mathbb G(c-1,N)$ consisting of $(c-1)$-planes meeting $X$ and its corresponding Chow form $[F_X]$. It is indeed a classical fact that $[F_X]$ determines $X$.
Already from this point of view, it is then natural and important to find ways to describe the Chow form.
In their seminal 2003 paper \cite{esw}, Eisenbud, Schreyer and Weyman showed, among other things, that there are particularly simple ways of describing the Chow form of $X$, as soon $X$ carries some special sheaves, namely Ulrich and weakly Ulrich sheaves (see Definition \ref{def}). This originated the question of existence of Ulrich sheaves, already posed in \cite{esw} and subsequently studied by many authors (see for example \cite{b2, c1, c2, cfk1, cfk2, cg, cmr1, cmr2, cohu, cmrpl, esw, f, lo2} and references therein). In a series of recent papers, starting with the breakthrough paper \cite{a1}, and followed by \cite{lo1, a2, a3}, the existence problem for Ulrich bundles (and in fact also for aCM bundles \cite{a2}) was solved in the negative over the complex numbers. The guiding principle of these papers is the fact, coming from an idea of Beauville \cite{b}, described as follows. Let $X \subset \P^N$ be a complex smooth $n$-dimensional variety and let $H$ be a hyperplane divisor. If
$(n+1)H^n < \big(K_X^2-2c_2(X)\big)H^{n-2}$, then there are no Ulrich bundles (\cite{a1, lo1}, indeed no aCM bundles \cite{a2}) for $(X,\O_X(1))$.

Consider now the existence problem for weakly Ulrich bundles. Since the required vanishings are considerably less, it is in general true that there are more weakly Ulrich bundles than Ulrich bundles on a given variety (for a recent evidence, see \cite{bgl}). Despite that, we prove that, over the complex numbers, if
$$(n+73)H^n < \big(K_X^2-2c_2(X)\big)H^{n-2}$$
then there is no weakly Ulrich bundle for $(X,\O_X(1))$.

As a matter of fact, observing that a bundle $\E$ is weakly Ulrich if and only if both $\E$ and $\E^*(K_X+(n+1)H)$ are $1$-regular with respect to $\O_X(1)$ (see Lemma \ref{1-1}), the obstruction to the existence of weakly Ulrich bundles follows by a more general result, of independent interest, that gives an obstruction to the existence of bundles with some regularity properties, as follows.

\begin{bthm}
\label{main1}
Let $X \subseteq \P^N$ be a smooth irreducible variety of dimension $n \ge 2$, let $H \in |\O_X(1)|$ and let $\E$ be a vector bundle on $X$. Suppose that $\E$ is $a$-regular, with respect to $\O_X(1)$, and that $\E^*(K_X+(n+1)H)$ is $b$-regular, for some $a, b \in \Z$. Then $a+b \ge 0$ and
$$H^n \ge \frac{1}{n-2+3(2a+2b+1)^2} \big(K_X^2-2c_2(X)\big)H^{n-2}.$$
\end{bthm}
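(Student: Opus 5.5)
Write $r=\rk\E$ and $L=K_X+(n+1)H$. The plan is to restrict to a general smooth surface section and then combine Riemann--Roch, Bogomolov's inequality and a bound on $c_1$ coming from the regularity hypotheses.

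First we show $a+b\ge 0$. Since $\E$ is $a$-regular, $\E(a)$ is globally generated, so $H^0(\E(a))\ne 0$; similarly $H^0(\E^*(L)(b))\ne 0$. By Serre duality, $h^0(\E^*(K_X+(n+1+b)H))=h^n(\E(-(n+1+b)H))$. On the other hand, $a$-regularity gives $H^n(\E(a-n))=0$, and hence $H^n(\E(m))=0$ for all $m\ge a-n$. If $a+b<0$, then $-(n+1+b)\ge a-n$, so $h^n(\E(-(n+1+b)H))=0$, contradicting $H^0(\E^*(L)(b))\ne 0$. Hence $a+b\ge 0$.

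Next we cut down to a surface. Let $S=H_1\cap\dots\cap H_{n-2}$ be a general smooth surface section. Adjunction gives $K_S=(K_X+(n-2)H)|_S$, and the normal bundle sequences yield $c_2(S)=\big(c_2(X)-(n-2)K_XH+\binom{n-1}{2}H^2\big)H^{n-2}$, up to the exact binomial term. We then express $(K_X^2-2c_2(X))H^{n-2}$ in terms of $K_S^2-2c_2(S)$, $K_SH$ and $H^2$ on $S$. This routine bookkeeping produces the term $(n-2)H^n$ in the denominator. Regularity passes to restrictions, so $\E_S$ is $a$-regular on $(S,H)$, and $\E_S^*(K_S+3H)$ is $b$-regular because $L|_S=K_S+3H$. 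This reduces the claim to $n=2$, where the inequality reads $(K_S^2-2c_2(S))\le 3(2a+2b+1)^2H^2$, possibly with an extra correction that the bookkeeping absorbs.

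On the surface we use Riemann--Roch and $\chi(\O_S)=(K_S^2+c_2)/12$. The aim is an identity for $\chi(\E(t))$ whose difference from $r\chi(\O_S(t))$ involves $c_1$ and $\Delta=2rc_2-(r-1)c_1^2$. Regularity gives vanishings $h^2(\E(t))=0$ for $t\ge a-2$, and Serre duality applied to the second condition gives $h^0(\E(t))=0$ for $t\le -a-b-\ldots$. So for $t$ in a window of length about $a+b$, $\chi(\E(t))=-h^1\le 0$. Evaluating $\chi$ at suitable integer points in that window, and also using the second difference in $t$, which is $rH^2$, bounds $c_1\cdot H$ and the quantity $\chi(\E)$. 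The second ingredient is the Castelnuovo--Mumford bound that $\E(a)$ is globally generated, which makes $c_2(\E(a))\ge 0$ and $c_1(\E(a))$ nef. Symmetrically, $\E^*(K_S+(3+b)H)$ is globally generated. Together these sandwich $c_1(\E)$ between $-aH$ and $K_S+(3+b)H$ numerically. Hodge index then bounds $c_1^2$ and $c_1K_S$ in terms of $H^2$, $K_SH$, $K_S^2$ and $(a+b)$.

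Combining these, the expression $r\chi(\O_S)=r(K_S^2+c_2(S))/12$ is bounded above, via the Riemann--Roch identity, by quantities involving $\Delta$, $c_1$ and $H^2$. Since $\chi(\O_S)=(K_S^2+c_2)/12$, we rewrite $K_S^2-2c_2=3K_S^2-24\chi$. Using $\chi(\E(t))\le0$ at a midpoint $t_0$ lying in the vanishing window, $\Delta\ge0$ is not even needed; $c_2$ is handled via global generation. The result is $-24\chi(\O_S)+3K_S^2\le 3(2a+2b+1)^2H^2$. The constant $(2a+2b+1)^2$ should arise from completing the square in $t$ around the midpoint $t_0=(b-a-3)/2$-ish shifted by $K_S/2$, that is, $(c_1-\tfrac r2(K_S+(b-a+3)H))^2\le$ Hodge-index bound.

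The main obstacle is this last step: choosing the right twist and completing the square so that Hodge index applied to $c_1-\frac r2 K_S-\lambda rH$ yields exactly the factor $3(2a+2b+1)^2$. I would first try symmetrizing $\E\oplus\E^*(K_S+3H)$, whose $c_1$ is $r(K_S+3H)$, so that the cross terms cancel.
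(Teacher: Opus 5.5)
Your argument for $a+b\ge 0$ is correct, and more direct than the paper's (which pushes $\E^*(K_S+(4-a)H)$ forward to $\P^2$ and compares Harder--Narasimhan slopes). The reduction to $n=2$ is also the paper's. One correction: the middle term has a plus sign, $c_2(S)=c_2(X)H^{n-2}+(n-2)K_XH^{n-1}+\binom{n-1}{2}H^n$, and then $K_S^2-2c_2(S)=(K_X^2-2c_2(X))H^{n-2}-(n-2)H^n$ exactly.

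The genuine gap is the surface case, which is the whole content of the theorem, and the route you sketch cannot work. First, the ``vanishing window'' does not exist in general. One has $h^2(\E(t))=0$ for $t\ge a-2$ and $h^0(\E(t))=0$ for $t\le -b-1$, and these ranges overlap only if $a+b\le 1$. For weakly Ulrich bundles ($a=b=1$) there is no $t$ with $\chi(\E(t))=-h^1(\E(t))$. Second, even when available, $\chi(\E(t))\le 0$ points the wrong way. Riemann--Roch gives
\[
r^2\bigl(8\chi(\O_S)-K_S^2\bigr)=8r\chi(\E(t))-4\bigl(c_1(\E(t))-\tfrac{r}{2}K_S\bigr)^2+4\Delta(\E),
\]
so an upper bound on $\chi(\E(t))$ only yields a \emph{lower} bound on $K_S^2-8\chi(\O_S)=\frac13\bigl(K_S^2-2c_2(S)\bigr)$. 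What is needed is a lower bound on $\chi$, an upper bound on the square, and a lower bound on $\Delta$. The paper gets these from three sources:
\begin{enumerate}
\item $\chi(\E(a-1))=h^0+h^2\ge 0$, since $h^1(\E(a-1))=0$;
\item the Hodge index theorem;
\item Langer's inequality $d\,\Delta(\E)+r^2(\mu_{\max}-\mu)(\mu-\mu_{\min})\ge 0$, which is needed because $\E$ need not be semistable.
\end{enumerate}
Your substitute $c_2(\E(a))\ge 0$ only gives $\Delta\ge -(r-1)c_1(\E(a))^2$. After division by $r^2$ this is only $\ge -(r-1)(\mu+ad)^2/d$, a bound that degrades with the rank, so no rank-free inequality can come out.

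The decisive missing idea is how to bound $\mu_{\max}$ and $\mu_{\min}$ sharply. With $d=H^2$ and $k=K_SH$, your nef sandwich gives only $-ad\le\mu\le k+(3+b)d$, which is off by $\frac{k+3d}{2}$ at each end. Plugged into the computation, it leaves terms in $k$ and $k^2/d$ that no longer cancel (note $k^2/d\ge K_S^2$ by Hodge index), so no bound of the form $C\,H^2$ results.

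The paper instead uses a finite linear projection $\pi\colon S\to\P^2$. Then $\G=\pi_*\E$ is a bundle with $H^i(\G(t))\cong H^i(\E(t))$, and $\G(a)$, $\G^*(b)$ are $0$-regular, hence globally generated, so $\G\subseteq\O_{\P^2}(b)^{\oplus\beta}$. Apply this to the first and last Harder--Narasimhan factors and use $\mu_h(\pi_*\F)=\mu_H(\F)/d-\frac{k+3d}{2d}$ (from equality of Hilbert polynomials). This gives
\[
\tfrac{k+3d}{2}-ad\le\mu_{\min}(\E)\le\mu\le\mu_{\max}(\E)\le\tfrac{k+3d}{2}+bd,
\]
bounds centered at the Ulrich slope. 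Setting $x=2\mu-k$, the $x^2$ terms from Hodge index and from Langer cancel, leaving
\[
0\le 8\chi(\O_S)-K_S^2+2(a+b+1)x+d(4a^2+4ab-2a-6b-5).
\]
Using $x\le(3+2b)d$ together with $a+b+1>0$ (this is where $a+b\ge 0$ is used) gives exactly $K_S^2-8\chi(\O_S)\le(2a+2b+1)^2d$. Your idea of symmetrizing with $\E\oplus\E^*(K_S+3H)$ fixes $\mu$, but it controls neither $\mu_{\max},\mu_{\min}$ nor $\Delta$, so it does not replace this step.
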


Then we specialize to the weakly Ulrich case.

\begin{bcor}
\label{nonext}
Let $X \subseteq \P^N$ be a smooth complex irreducible variety of dimension $n \ge 2$ and let $H \in |\O_X(1)|$. If
\begin{equation}
\label{dis}
H^n < \frac{1}{n+73}\big(K_X^2-2c_2(X)\big)H^{n-2}
\end{equation}
then there is no weakly Ulrich bundle for $(X,\O_X(1))$ and for all successive smooth hyperplane sections $(X_i,\O_{X_i}(1)), 2 \le i \le n-1$, where $X_i = X \cap H_1 \cap \ldots \cap H_{n-i}$ with $H_j \in |H|$.
\end{bcor}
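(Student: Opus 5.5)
The plan is to deduce Corollary \ref{nonext} from Theorem \ref{main1} with $a=b=1$, and then to propagate the hypothesis \eqref{dis} to the hyperplane sections.

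\textbf{Step 1: the case of $X$.} Suppose $\E$ is weakly Ulrich for $(X,\O_X(1))$. By Lemma \ref{1-1}, both $\E$ and $\E^*(K_X+(n+1)H)$ are $1$-regular. Theorem \ref{main1} with $a=b=1$ then gives
$$H^n \ge \frac{1}{n-2+3\cdot 25}\big(K_X^2-2c_2(X)\big)H^{n-2}=\frac{1}{n+73}\big(K_X^2-2c_2(X)\big)H^{n-2}.$$
This contradicts \eqref{dis}, so no such $\E$ exists. The constant checks out because $(2a+2b+1)^2=25$ and $n-2+75=n+73$.

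\textbf{Step 2: the sections.} Let $X_i=X\cap H_1\cap\cdots\cap H_{n-i}$ be smooth of dimension $i\ge 2$, and write $h=H_{|X_i}$. I will show
$$(i+73)\,h^i<\big(K_{X_i}^2-2c_2(X_i)\big)h^{i-2},$$
and then apply Step 1 to $(X_i,\O_{X_i}(1))$.

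By adjunction, $K_{X_i}=(K_X+(n-i)H)_{|X_i}$. Restricting the tangent sequence gives
$$0\to T_{X_i}\to T_{X|X_i}\to \O_{X_i}(1)^{\oplus (n-i)}\to 0,$$
so $c(T_{X_i})=c(T_X)_{|X_i}(1+h)^{-(n-i)}$. Set $m=n-i$. Then
$$c_1(X_i)=(c_1-mH)_{|X_i},\qquad c_2(X_i)=\Big(c_2-mc_1H+\tbinom{m+1}{2}H^2\Big)_{|X_i}.$$
Substituting $K_X=-c_1$, and using that pulling back to $X_i$ and multiplying by $h^{i-2}$ amounts to multiplying by $H^{n-2}$ on $X$:
$$\big(K_{X_i}^2-2c_2(X_i)\big)h^{i-2}=\big(K_X^2-2c_2(X)\big)H^{n-2}+\big(m^2-m(m+1)\big)H^n=\big(K_X^2-2c_2(X)\big)H^{n-2}-mH^n.$$
The two terms $2mK_XH$ and $2mc_1H=-2mK_XH$ cancel.

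Since $h^i=H^n$, hypothesis \eqref{dis} gives
$$\big(K_{X_i}^2-2c_2(X_i)\big)h^{i-2}>(n+73)H^n-(n-i)H^n=(i+73)\,h^i,$$
which is exactly the needed inequality. Step 1 then applies on $X_i$, since the corollary is stated for all $i\ge2$ and Theorem \ref{main1} needs only $\dim\ge 2$.

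The main point to verify carefully is the Chern class computation in Step 2, in particular that the $K_XH^{n-1}$ terms cancel. Once that is confirmed, the drop of $n-i$ in the constant exactly matches the correction term $-mH^n$, and no further estimate is needed.
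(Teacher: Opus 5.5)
Your proof is correct and follows the paper's argument: Lemma \ref{1-1} and Theorem \ref{main1} with $a=b=1$ handle $X$, and \eqref{dis} is then transferred to the sections $X_i$. The only difference is that you explicitly carry out the Chern class computation $\big(K_{X_i}^2-2c_2(X_i)\big)h^{i-2}=\big(K_X^2-2c_2(X)\big)H^{n-2}-(n-i)H^n$, which the paper delegates to \cite{lo1} (Proof of Lemma 3.1); your formula agrees with the surface-section formulas used in the proof of Theorem \ref{main1}.
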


Starting from the above obstruction, in the same spirit of \cite{lo1}, we construct the first examples of complex varieties $X \subset \P^N$ that do not carry any weakly Ulrich bundle. 

\begin{bthm}
\label{main2}
For every integer $k \ge 3$, let $Y_k$ be the minimal resolution of the exponent-$k$ Hirzebruch-Kummer cover associated with the Hesse arrangement and let $L_k = 4A_k-E_k$ (see \cite[Thm.~1.2]{a1}).
If $k \ge 8$, there are no weakly Ulrich bundles for $(Y_k,L_k)$. 

Moreover, let $n \ge 3$, let $Z$ be any smooth irreducible $(n-2)$-dimensional variety and let $H_Z$ be any very ample line bundle on $Z$. Let 
$$X=Y_k \times Z \subset \P H^0(H)$$
where $H=L_k \boxtimes H_Z$ and let $X' = X \cap F_s, s \ge 1$ be a smooth hypersurface section, $H'=H_{|X'}$. 
Then for $k \gg 0$ the pairs $(X, H), (X', H')$, and all their successive hyperplane sections until dimension $2$, do not carry a weakly Ulrich bundle. 
\end{bthm}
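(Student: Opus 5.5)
The plan is to apply Corollary \ref{nonext}, or directly Theorem \ref{main1} with $a=b=1$ via Lemma \ref{1-1}. We only need to check inequality (\ref{dis}) for each pair, and that inequality only involves the intersection numbers $H^n$, $K_X^2H^{n-2}$ and $c_2(X)H^{n-2}$. For the surfaces $(Y_k,L_k)$ with $n=2$, condition (\ref{dis}) reads
$$75\,L_k^2 < K_{Y_k}^2-2c_2(Y_k).$$
From \cite[Thm.~1.2]{a1} we take the explicit formulas for $K_{Y_k}^2$, $c_2(Y_k)$ and $L_k^2=(4A_k-E_k)^2$ as functions of $k$. Those of $K^2$ and $c_2$ grow like $k^9$ and are computable from the Hirzebruch formulas for the Hesse arrangement: $9k^{9}$ times the log-Chern numbers, corrected by the exceptional curves over the quadruple points. $L_k^2$ grows only like $k^7$ (or similar, since $A_k$ is a pullback of a line and the degree of the cover is $k^8$). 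Expanding, we would verify the polynomial inequality for $k\ge 8$, checking small values numerically if needed. I expect this precise arithmetic, with the threshold exactly $k\ge 8$, to be the main (computational) obstacle. To get the right constant I would rely on the paper's exact expressions for $A_k^2$, $A_kE_k$, $E_k^2$.

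For $X=Y_k\times Z$ with $H=L_k\boxtimes H_Z$, write $d=H_Z^{n-2}$. Then $H^n=\binom{n}{2}L_k^2\,d$. Using $K_X=p^*K_{Y_k}+q^*K_Z$ and $c_2(X)=p^*c_2(Y_k)+p^*K_{Y_k}\cdot q^*K_Z+q^*c_2(Z)$ (from $c(X)=c(Y_k)c(Z)$), the term $(K_X^2-2c_2(X))H^{n-2}$ equals $(K_{Y_k}^2-2c_2(Y_k))d$ plus terms of type $(K_{Y_k}L_k)(K_ZH_Z^{n-3})$ and $L_k^2(\cdot)$, the latter depending only on $Z$. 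These extra terms are $O(k^{8})$ at most, or indeed $O(L_k^2+K_{Y_k}L_k)$, while the main term grows faster. So (\ref{dis}) holds for $k\gg0$. Corollary \ref{nonext} then gives non-existence also on all successive smooth hyperplane sections down to dimension $2$.

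For $X'=X\cap F_s$ with $s\ge1$, adjunction gives $K_{X'}=(K_X+sH)_{|X'}$ and $c_2(X')=(c_2(X)-sHK_X-s^2H^2\cdots)_{|X'}$, obtained from $c(X')=c(X)/(1+sH)$. Hence $(K_{X'}^2-2c_2(X'))H'^{n-3}$ equals $s(K_X^2-2c_2(X))H^{n-2}$ plus terms involving $K_XH^{n-1}$ and $H^n$, and likewise $H'^{n-1}=sH^n$. The dominant $k$-growth is again in $(K_{Y_k}^2-2c_2(Y_k))$. Thus for fixed $s$ and $k\gg 0$, inequality (\ref{dis}) holds for $(X',H')$, with $n-1$ in place of $n$. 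Corollary \ref{nonext} concludes, including all hyperplane sections until dimension $2$.
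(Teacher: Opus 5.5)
Your route is the paper's: reduce to inequality \eqref{dis} via Corollary \ref{nonext}, compute the intersection numbers on $Y_k\times Z$ and on $X'$ by the product and adjunction formulas, and conclude from the growth of the invariants of $Y_k$. However, the computations you sketch contain errors that leave your asymptotic argument unsupported as written. You also never supply the numerical input that actually decides the statement.

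\textbf{The input from \cite{a1}.} Thm.~1.2 of \cite{a1} gives $L_k^2=7k^9$ and $K_{Y_k}^2-2c_2(Y_k)=3\sigma(Y_k)=3(3k^2-11)k^9$. The Hesse arrangement has $12$ lines, so the cover has degree $k^{11}$; your orders $k^9$, $k^7$, $k^8$ are off. With these values, \eqref{dis} for $(Y_k,L_k)$ reads $7/(3k^2-11)<1/25$, i.e.\ $k^2>62$, which is exactly $k\ge 8$.

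\textbf{The product $X=Y_k\times Z$.} The mixed terms cancel, since $K_X^2$ and $2c_2(X)$ both contain $2p^*K_{Y_k}\cdot q^*K_Z$. Hence
\[
(K_X^2-2c_2(X))H^{n-2}=(K_{Y_k}^2-2c_2(Y_k))H_Z^{n-2}+\binom{n-2}{2}L_k^2\,(K_Z^2-2c_2(Z))H_Z^{n-4}.
\]
There is no $(K_{Y_k}L_k)(K_ZH_Z^{n-3})$ term. Your assertion that such terms are present and $O(k^8)$ is not justified by anything you quote, and it is not needed.

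\textbf{The hypersurface section $X'$.} Since $c_1(X)=-K_X$, one has $c_2(X')=(c_2(X)+sHK_X+s^2H^2)_{|X'}$, not the version with minus signs. Hence $K_{X'}^2-2c_2(X')=(K_X^2-2c_2(X)-s^2H^2)_{|X'}$ and the $K_XH$ terms cancel. Inequality \eqref{dis} for $(X',H')$ is then exactly $(n+72+s^2)H^n<(K_X^2-2c_2(X))H^{n-2}$. With your signs, a $K_XH^{n-1}$ term involving $K_{Y_k}L_k$ would survive, and you never bound it against $\sigma(Y_k)$.

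Once these are corrected, every case reduces to $g\,L_k^2< 3\sigma(Y_k)H_Z^{n-2}$, where $g$ is a constant depending only on $n,s,Z,H_Z$. This holds for $k\gg0$ because $L_k^2/\sigma(Y_k)=7/(3k^2-11)\to 0$, which is precisely the paper's argument.
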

As observed in \cite{lo1}, the hypersurface sections $X'$ above are not isomorphic to a (non-trivial) product.

We stated the above theorem for weakly Ulrich bundles, but, as a matter of fact, it is far more general. Indeed, as observed in Remark \ref{noab}, the same proof gives that for any given cohomology table of a bundle $\E$ that implies that ${\rm reg}_H(\E)$ and ${\rm reg}_H(\E^*(K_X+(n+1)H))$ are given by the table, we can find pairs $(X,H)$ not having bundles with that cohomology table. 

An instance of this is ordinary and semistable non-ordinary instanton bundles (see Definition \ref{defi}).

\begin{bcor}
\label{inst}
Fix integers $q_0 \ge 0, \gamma_0 \le \gamma_1$. For $k \gg 0$, the pairs $(Y_k,L_k), (X, H), (X', H')$ in Theorem \ref{main2}, and all their successive hyperplane sections until dimension $2$, do not carry neither an ordinary nor a $\mu$-semistable non-ordinary instanton bundle of quantum number $q \le q_0$. Similarly, on the pairs $(Y_k,L_k), (X, H), (X', H')$, there are no bundles with supernatural cohomology and roots of the Hilbert polynomial lying in the interval $[\gamma_0,\gamma_1]$.
\end{bcor}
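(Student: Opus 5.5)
The plan is to reduce everything to Theorem \ref{main1}. The key point is that the cohomological conditions defining each class of bundles bound the two regularities ${\rm reg}_H(\E)$ and ${\rm reg}_H(\E^*(K_X+(n+1)H))$ by constants depending only on $n$ and on the fixed data $q_0$ (resp. $\gamma_0,\gamma_1$). These bounds do not depend on $k$. The strategy of Theorem \ref{main2} then applies: it rests on the fact that $\big(K_{Y_k}^2-2c_2(Y_k)\big)/L_k^2 \to \infty$ as $k\to\infty$, via the computations of \cite[Thm.~1.2]{a1}, and this ratio property passes to $X$, $X'$ and to their hyperplane sections.

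\emph{Step 1: instantons.} Following Definition \ref{defi}, let $\E$ be an ordinary instanton with quantum number $q\le q_0$. Then $\E$ has a prescribed cohomology: vanishing of the intermediate cohomology of suitable twists and the condition $h^1(\E(-1))=q$ (or the analogue in the paper's normalization). I would use Castelnuovo--Mumford regularity, inducting through hyperplane sections with the restriction sequences $0\to \E(t-1)\to \E(t)\to \E_{|H}(t)\to 0$. The aim is to show that a nonzero $h^1$ of size at most $q$ can only shift the regularity by an amount bounded linearly in $q$, for instance ${\rm reg}(\E)\le c(n)+q$.

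Instanton bundles come with a self-duality condition such as $\E^*\cong \E(\text{fixed twist})$. Alternatively, the defining vanishings are Serre-dual symmetric. Either way, the same bound holds for $\E^*(K_X+(n+1)H)$. In the non-ordinary case, $\mu$-semistability is used to obtain the extreme vanishings $h^0(\E(-t))=0$ and $h^n(\E(t))=0$ for the relevant $t$, since these are not part of the definition there. This gives $a,b\le a_0(n,q_0)$.

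\emph{Step 2: supernatural bundles.} If $\E$ is supernatural with roots $z_1>\dots>z_n$ of the Hilbert polynomial in $[\gamma_0,\gamma_1]$, then $H^i(\E(t))\ne 0$ only for $t$ in the interval $(z_{i+1},z_i)$. Hence $\E$ is $a$-regular with $a=\lceil\gamma_1\rceil+n+1$ or a similar value. Serre duality gives $H^i(\E^*(K_X+(n+1)H)(t))=H^{n-i}(\E(-t-n-1))^*$, so the dual twist is again supernatural, with roots $-n-1-z_i\in[-n-1-\gamma_1,-n-1-\gamma_0]$. This gives a $b$ bounded in terms of $\gamma_0$ and $n$.

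\emph{Step 3: conclusion.} With $a+b\le C$ fixed, Theorem \ref{main1} forces
$$\big(K_X^2-2c_2(X)\big)H^{n-2}\le\big(n-2+3(2C+1)^2\big)H^n.$$
For $k\gg0$ this inequality fails on $(Y_k,L_k)$, on $(X,H)$, on $(X',H')$ and on their successive hyperplane sections, exactly as in the proof of Theorem \ref{main2}; compare Remark \ref{noab}. This yields the corollary.

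The main obstacle is Step 1, namely extracting a uniform regularity bound from the instanton conditions. Only $h^1(\E(-1))$ is prescribed, while the other $h^1(\E(t))$ for $t\le -2$ are not. I would first try to show that $h^1(\E(t))$ is nonincreasing as $t$ decreases, using the sections $H^0(\E_{|H}(t))=0$ obtained from semistability. This confines the nonvanishing to a window whose width is controlled by $q$.
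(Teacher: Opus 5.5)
Your treatment of ordinary instantons and of supernatural bundles matches the paper. For ordinary instantons you need not rebuild the regularity bound by hand: \cite[Prop.s 6.6 and 5.1]{ac} give that both $\E$ and $\E^*(K_X+(n+1)H)$ are $q$-regular (Remark \ref{regi}). For supernatural bundles, \cite[Thm.~6.4]{es} gives $(z_1+1)$- and $(-z_n-n)$-regularity, as your duality computation predicts. Theorem \ref{main1} then concludes as in Remark \ref{noab}.

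The genuine gap is the $\mu$-semistable non-ordinary case, where your Step 1 does not work. The extreme vanishings are not what is missing, since Definition \ref{defi}(i) imposes them for $\delta=1$ as well. What breaks is the symmetry. For $\delta=1$ the conditions are invariant under $\E\mapsto\E^*(K_X+nH)$, not under $\E\mapsto\E^*(K_X+(n+1)H)$. A relation $\E^*\cong\E(\text{twist})$ holds only in special cases, such as rank $2$ orientable. Consequently $b$-regularity of $\E^*(K_X+(n+1)H)$ is governed by the groups $H^{n-1}(\E(t))$, $t\le -n$, whereas condition (iv) only fixes $h^{n-1}(\E(1-n))=q$. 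Proposition \ref{no} shows that $h^{n-1}(\E(-n))$ is unbounded for unstable examples with $q=2$, so semistability would have to do all the work.

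Your remedy fails already on surfaces, which is the case of $(Y_k,L_k)$. To pass from the prescribed $h^1(\E(-1))=q$ down to $h^1(\E(-2))$ you need $H^0(\E_{|C}(-1))=0$ for $C\in|H|$. But $\E_{|C}$ need not be semistable. Even if it were, $\alpha H=0$ (as in Claim \ref{primo}) gives $\deg\E_{|C}(-1)=\frac r2 K_S\cdot H$, which is positive on the surfaces of general type $Y_k$, so nothing forces that vanishing. Hence no bound $a,b\le a_0(n,q_0)$ comes out, and Theorem \ref{main1} cannot be invoked.

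The missing idea is to use semistability only through Bogomolov's inequality, which gives a different obstruction, Lemma \ref{noinst}. On a surface, conditions (i), (iv), (v) give $\chi(\E(-1))=-q$ and $\chi(\E)=\chi(\E(-2))$. The second difference of the Hilbert polynomial then yields $\chi(\E)=\frac{rd}{2}-q$. Set $\alpha=c_1(\E)-\frac r2(K_S+2H)$; then $\alpha H=0$ by \cite[Thm.~1.6]{ac}, hence $\alpha^2\le 0$ by Hodge index. Riemann--Roch now turns $\Delta(\E)\ge 0$ into $K_S^2-2c_2(S)\le 24q/r$.

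For $n\ge 3$, restrict to a general surface section $S$. Then $\E_{|S}$ is non-ordinary with quantum number $2q$ by \cite[Cor.~4.5]{ac}, and $\Delta(\E_{|S})=\Delta(\E)H^{n-2}\ge 0$ by Bogomolov. This gives $(n-2)H^n\ge (K_X^2-2c_2(X))H^{n-2}-48q/r$. That inequality fails for $k\gg 0$ on all the pairs and their hyperplane sections, by the computation in Theorem \ref{main2} with $n+73$ replaced by $n-2$ (plus the additive term $48q_0$).
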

We cannot say the same for unstable non-ordinary instanton bundles, because one of the regularities can be unbounded, see Proposition \ref{no}.

In the case of Ulrich bundles, perhaps at this point the main question that remains open in characteristic zero is the existence for sufficiently ample polarizations (that is known in the Ulrich case for surfaces by \cite{cohu}). On the other hand, in arbitrary characteristic, a recent result by J. Pine \cite{p} shows that the existence problem has a positive answer for weakly Ulrich bundles on $n$-dimensional varieties for sufficiently ample polarizations. Moreover, in positive characteristic, J. Pine \cite{p} also proves that the existence problem has a positive answer for Ulrich bundles on surfaces and for weakly Ulrich bundles on $n$-dimensional varieties.

\section{Weakly Ulrich bundles}

Following \cite[\S 2]{esw}, we have

\begin{defi}
\label{def}
Let $X \subseteq \P^N$ be a smooth irreducible variety of dimension $n \ge 1$. Let $\E$ be a vector bundle on $X$. We say that $\E$ is a {\it weakly Ulrich vector bundle} for $(X,\O_X(1))$ if the following vanishings hold:
\begin{itemize} 
\item[(i)] $H^i(\E(-t))=0$ for $1 \le i \le n$ and $t \le i-1$.
\item[(ii)] $H^i(\E(-t))=0$ for $0 \le i \le n-1$ and $t \ge i+2$.
\end{itemize} 
\end{defi}

Even though it is probably well-known, we observe, for a lack of reference, that weakly Ulrichness can be characterized by regularity.

\begin{lemma} 
\label{1-1}
Let $X \subseteq \P^N$ be a smooth irreducible variety of dimension $n \ge 1$, let $H \in |\O_X(1)|$ and let $\E$ be a vector bundle on $X$. Then $\E$ is weakly Ulrich for $(X,\O_X(1))$ if and only if both $\E$ and $\E^*(K_X+(n+1)H)$ are $1$-regular with respect to $\O_X(1)$. In particular, if $n \ge 2, Y \in |\O_X(1)|$ is smooth and $\E$ is weakly Ulrich, then $\E_{|Y}$ is weakly Ulrich for $(Y,\O_Y(1))$.
\end{lemma}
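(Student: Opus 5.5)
The plan is to translate both conditions into vanishings and match them index by index, using Serre duality for the second regularity condition. Recall that a sheaf $\F$ is $m$-regular with respect to $\O_X(1)$ if $H^i(\F(m-i))=0$ for all $i \ge 1$.

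\emph{Step 1: the regularity of $\E$.} This condition, $1$-regularity of $\E$, means $H^i(\E(1-i))=0$ for $i \ge 1$. By Castelnuovo--Mumford, $1$-regularity implies $m$-regularity for all $m \ge 1$, i.e.\ $H^i(\E(m-i))=0$ for $m \ge 1$. Setting $t = i-m$, this is exactly $H^i(\E(-t))=0$ for $1\le i\le n$ and $t \le i-1$, which is condition (i). Conversely, (i) with $t=i-1$ gives $1$-regularity. Hence (i) is equivalent to $\E$ being $1$-regular.

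\emph{Step 2: the regularity of $\E^*(K_X+(n+1)H)$.} Put $\G=\E^*(K_X+(n+1)H)$. Its $1$-regularity, again propagated by Castelnuovo--Mumford, is equivalent to $H^j(\G(m-j))=0$ for all $j\ge 1$ and $m\ge 1$. Serre duality gives
$$H^j(\G(m-j)) \cong H^{n-j}(\E((j-m-n-1)H))^*.$$
Set $i=n-j$, so that $0\le i\le n-1$, and $t=m+n+1-j=m+i+1$. As $m$ ranges over $m\ge 1$, $t$ ranges over $t\ge i+2$. So the vanishings become $H^i(\E(-t))=0$ for $0\le i\le n-1$ and $t\ge i+2$, which is condition (ii). Again the converse follows by taking $m=1$. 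Thus (ii) is equivalent to $1$-regularity of $\G$, and together with Step 1 this proves the first assertion.

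\emph{Step 3: restriction to a hyperplane section.} Let $Y\in|\O_X(1)|$ be smooth and $\E$ weakly Ulrich. Regularity restricts: from $0\to\E(m-i-1)\to\E(m-i)\to\E_{|Y}(m-i)\to 0$, the vanishings $H^i(\E(m-i))=0$ and $H^{i+1}(\E(m-i-1))=0$ give $H^i(\E_{|Y}(m-i))=0$. Hence $\E_{|Y}$ is $1$-regular. By adjunction, $K_Y=(K_X+H)_{|Y}$, so
$$\E_{|Y}^*(K_Y+nH_{|Y}) = \G_{|Y},$$
with $\dim Y=n-1$. The same restriction argument shows $\G_{|Y}$ is $1$-regular, and the first part applied on $Y$ concludes.

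The only delicate point is the index bookkeeping in Step 2. One must also remember to invoke the Castelnuovo--Mumford propagation, so that the single-twist definition of regularity yields the full ranges of $t$.
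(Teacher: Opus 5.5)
Your proposal is correct and follows essentially the same route as the paper: Castelnuovo--Mumford propagation from $1$-regularity to $m$-regularity, Serre duality with the substitution $t=m+i+1$ for condition (ii), and descent of regularity to $Y$ combined with $K_Y=(K_X+H)_{|Y}$. The only cosmetic differences are that you match (i) with the regularity of $\E$ and (ii) with that of $\E^*(K_X+(n+1)H)$ separately instead of proving the two implications, and that you verify the descent of regularity via the restriction sequence rather than citing it.
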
 
\begin{proof}
Assume that $\E$ is weakly Ulrich and fix $i>0$. Then $H^i(\E(1-i))=0$ by Definition \ref{def}(i), hence $\E$ is $1$-regular. Also, by Serre's duality, $h^i(\E^*(K_X+(n+2-i)H))=h^{n-i}(\E(-(n+2-i)))=0$ by Definition \ref{def}(ii), so that $\E^*(K_X+(n+1)H)$ is $1$-regular. Vice versa, assume that both $\E$ and $\E^*(K_X+(n+1)H)$ are $1$-regular. Then, they are both $m$-regular for every $m \ge 1$ by \cite[Thm. 1.8.5(iii)]{laz}. Now, if $1 \le i \le n$ and $t \le i-1$, set $m=i-t \ge 1$. Then 
$$H^i(\E(-t))=H^i(\E(m-i))=0.$$ 
If $0 \le i \le n-1$ and $t \ge i+2$, set $m=t-i-1 \ge 1$ and $j=n-i>0$. Then, by Serre's duality, 
$$h^i(\E(-t))=h^{n-j}(\E(-i-1-m))=h^j(\E^*(K_X+(n+1+m-j)H))=0$$
so that $\E$ is weakly Ulrich. Now, as $m$-regularity descends to hyperplane sections, we have that
$\E_{|Y}$ is $1$-regular and $\E_{|Y}^*(K_Y+nH_{|Y})=\E^*(K_X+(n+1)H)_{|Y}$ is $1$-regular. Therefore 
$\E_{|Y}$ is weakly Ulrich.
\end{proof}

\section{Instanton bundles}

Following \cite[Def.~1.3]{ac}, we have

\begin{defi}
\label{defi}
Let $X \subseteq \P^N$ be a smooth irreducible variety of dimension $n \ge 2$. Let $\E$ be a non-zero vector bundle on $X$. We say that $\E$ is an {\it instanton bundle (with respect $\O_X(1)$) of defect $\delta \in \{0,1\}$ and quantum number $q \in \Z_{\ge 0}$} if the following conditions hold:
\begin{itemize} 
\item[(i)] $h^0(\E(-t-1))=h^n(\E(\delta-n+t))=0$ for $t \ge 0$.
\item[(ii)] $h^i(\E(-i-t-1))=h^{n-i}(\E(\delta+t-n+i))=0$ for $1 \le i \le n-2$ and $t \ge 0$.
\item[(iii)] $\delta\,h^i(\E(-i))=0$ for $2 \le i \le n-2$.
\item[(iv)] $h^1(\E(-1))=h^{n-1}(\E(\delta-n))=q$.
\item[(v)] $\delta\big(\chi(\E)-(-1)^n\chi(\E(-n))\big)=0$.
\end{itemize} 
$\E$ is called {\it ordinary} if $\delta=0$ and {\it non-ordinary} if $\delta=1$. $\E$ is {\it orientable} if $c_1(\E)=K_X+(n+1-\delta)H$.
\end{defi}

We recall some useful facts about instantons.
 
\begin{remark}
\label{regi}
If $\E$ is an ordinary instanton bundle on $X$ with quantum number $q$, it follows by \cite[Prop.s 6.6 and 5.1]{ac} that both $\E$ and $\E^*(K_X+(n+1)H)$ are $q$-regular (with respect to $\O_X(1)$).
\end{remark}

Hence, when $q$ is bounded, if $\E$ is ordinary, then both regularities of $\E$ and $\E^*(K_X+(n+1)H)$ are bounded.
We cannot say the same for non-ordinary instantons. Indeed, it follows by \cite[Prop.s 6.6 and 5.1]{ac} that if $\E$ is non-ordinary, then ${\rm reg}_H(\E) \le h^1(\E)+1$, that is still independent of $H$, but ${\rm reg}_H(\E^*(K_X+(n+1)H)) \le h^{n-1}(\E(-n))$. The latter can be unbounded, as in the example below.

\begin{prop}
\label{no}
There exist an abelian surface $X \subset \P^N$ and, for every $m \ge 2$, a rank $2$ non-ordinary orientable unstable instanton bundle $\E_m$, of quantum number $2$ and with fixed numerical Chern classes, such that
$$h^1(\E_m(-2)) \ge 3m^2-8.$$
\end{prop}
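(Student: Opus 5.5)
The plan is to take $E_m$ to be a direct sum of two line bundles $L_m\oplus M_m$ with $L_m\otimes M_m\equiv 2H$. This matches orientability, since for $n=2$, $\delta=1$ and $K_X=0$ we need $c_1(E_m)=K_X+(n+1-\delta)H=2H$. The line bundles are chosen so that the Chern classes stay fixed while the relevant cohomology grows with $m$.

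\emph{Choice of surface and classes.} I would take $X=C\times C$ with $C$ an elliptic curve, or any abelian surface with Picard number $\ge 3$. Then the intersection form on $NS(X)$ has signature $(1,\rho-1)$, and there are infinitely many classes of elliptic curves $D_m\subset X$ with $D_m^2=0$ and $H\cdot D_m\to\infty$. Examples are graphs of the multiplication maps, or translates of the subgroups $\{(x,mx)\}$ when $X=C\times C$. Fix a general $P\in \mathrm{Pic}^0(X)$ and set
$$E_m=\O_X(H+D_m+P)\oplus\O_X(H-D_m-P).$$
Then $c_1(E_m)=2H$ and $c_2(E_m)=(H+D_m)(H-D_m)=H^2-D_m^2=H^2$, so the numerical Chern classes are fixed. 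Also $\O_X(H+D_m+P)$ has slope $H^2+H\cdot D_m>H^2=\mu(E_m)$, so $E_m$ is unstable.

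\emph{Instanton conditions for $n=2$, $\delta=1$.} Conditions (ii) and (iii) of Definition \ref{defi} are empty, so I need to check (i), (iv) and (v).
\begin{itemize}
\item[(i)] For $t\ge 0$ we need $h^0(\O(\pm(D_m+P)-tH))=0$. For $t\ge1$ the divisor has negative $H$-degree, or degree $0$ and nontrivial, so this holds. For $t=0$ I would use the sequence $0\to \O_X(P)\to\O_X(D_m+P)\to P_{|D_m}\to 0$. Since $D_m^2=0$, this gives $\O(D_m+P)_{|D_m}=P_{|D_m}$, which is nontrivial for general $P$, and hence $h^0=0$. The condition $h^2(E_m(t-1))=h^0(\O(-tH\mp(D_m+P)))=0$ is immediate.
\item[(v)] Riemann--Roch on an abelian surface gives $\chi(\O(A))=A^2/2$. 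Hence $\chi(E_m)=\frac{(H+D_m)^2+(H-D_m)^2}{2}=\chi(E_m(-2))$.
\item[(iv)] The quantum number is $q=h^1(\O(D_m+P))+h^1(\O(-D_m-P))$, computed from the same restriction sequences and independent of $m$.
\end{itemize}

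\emph{Main obstacle: the exact value $q=2$.} With $P$ general the computation above gives $q=0$, not $2$. To reach exactly $q=2$ I would first try replacing the direct sum by a non-split extension $0\to\O(H+D_m+P)\to E_m\to \O(H-D_m-P)\otimes I_Z\to 0$ with $Z$ of length $2$. This changes $c_2$ by a constant, so the classes remain fixed, and should raise $h^1(E_m(-1))$ by $2$. The Cayley--Bacharach-type condition needed to make $E_m$ locally free should be checkable, since $h^0$ of the difference of the two line bundles twisted by $K_X$ is large. Alternatively one could choose $P$ suitably special rather than general. I expect this bookkeeping of $q$ to be the only delicate point.

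\emph{Growth of $h^1(E_m(-2))$.} We have $E_m(-2)=\O(D_m-H+P)\oplus\O(-H-D_m-P)$. The line bundle $H+D_m+P$ is ample, so on an abelian surface its $h^1$ vanishes, and by duality $h^1(\O(-H-D_m-P))=0$. For the other summand,
$$h^1(\O(D_m-H+P))\ge -\chi=H\cdot D_m-\frac{H^2}{2}.$$
Choosing the $D_m$ with $H\cdot D_m$ growing quadratically in $m$ then gives a bound of the form $h^1(E_m(-2))\ge 3m^2-8$. For instance, with $H=f_1+f_2+\Delta$ and $D_m$ the graph of multiplication by $m$ one gets $H\cdot D_m=m^2+1+(m-1)^2$, up to adjusting the constants.
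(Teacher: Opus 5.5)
\textbf{Gap: the proposal never establishes quantum number $q=2$.} For any direct sum $\O_X(H+\alpha)\oplus\O_X(H-\alpha)$ on an abelian surface, condition (i) at $t=0$ forces $h^0(\pm\alpha)=0$. Serre duality then gives $h^2(\pm\alpha)=h^0(\mp\alpha)=0$, so $q=h^1(\alpha)+h^1(-\alpha)=-\chi(\alpha)-\chi(-\alpha)=-\alpha^2$.

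With $\alpha=D_m+P$ and $D_m^2=0$ this gives $q=0$ for \emph{every} admissible $P$. So your alternative of choosing $P$ special cannot work. Indeed, if $P_{|D_m}$ is trivial then $h^0(\O_X(D_m+P))=1$ and (i) fails.

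The missing idea is to take $\alpha$ with $\alpha^2=-2$ instead of $0$, which is what the paper does. On $X=C\times C$ set $H=3(A+B)$, $A=p_1^*O$, $B=p_2^*O$, and $f_m(x,y)=(x+my,y)$. The paper takes $L_m=f_m^*(A-B)=f_m^*A-B$. Here $f_m^*A$ is exactly an elliptic curve of the kind you call $D_m$. Subtracting $B$, with $f_m^*A\cdot B=1$, gives $L_m^2=-2$.

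K\"unneth together with the automorphism $f_m$ gives $h^0(\pm L_m)=h^2(\pm L_m)=0$ and $h^1(\pm L_m)=1$. Hence $\O_X(H+L_m)\oplus\O_X(H-L_m)$ has $q=2$. Since $H\cdot L_m=3m^2$, you get $h^1(\E_m(-2))\ge-\chi(L_m-H)=3m^2-8$. Your instability argument, the fixed Chern classes, condition (v) and the $-\chi$ bound all carry over verbatim to this choice.

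\textbf{Your extension remedy can be completed, but not for the reason you give.} Consider $0\to\O_X(H+D_m+P)\to\E_m\to I_Z(H-D_m-P)\to 0$ with $\ell(Z)=2$. The Cayley--Bacharach condition is taken with respect to $|K_X+(H-D_m-P)-(H+D_m+P)|=|{-2D_m-2P}|$. This system is empty, so the condition holds vacuously; a large $h^0$ there would be an obstruction, not a help.

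Next, $\E_m^*\cong\E_m(-2)$, so condition (i) reduces to $h^0(\E_m(-1))=0$, which holds for general $P$. Then $q=-\chi(\E_m(-1))=2$ and $c_2=H^2+2$, the same invariants as the paper's bundle. None of these verifications appears in your proposal, so as written $q=2$ is not proved.

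\textbf{Two smaller points.}
\begin{itemize}
\item $H=f_1+f_2+\Delta$ has $H^2=6$ and $h^0(H)=3$, so it is not very ample. It also yields only $H\cdot D_m-H^2/2=2m^2-2m-1<3m^2-8$. With $H=3(A+B)$ and $D_m=\{(x,mx)\}$ one gets $H\cdot D_m=3(m^2+1)$ and the bound $3m^2-6$.
\item ``Any abelian surface with Picard number $\ge 3$'' is too broad. Simple abelian surfaces with quaternionic multiplication have $\rho=3$ and contain no elliptic curves.
\end{itemize}
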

\begin{proof}
Let $C$ be an elliptic curve with origin $O$, and let $X=C\times C$,
with projections $p_1,p_2$. Let
$$A=p_1^*O, B=p_2^*O \ \hbox{and let } H=3(A+B).$$
Since $\O_C(3O)$ is very ample, it follows that $H$ is very ample and we can consider the embedding $X \subset \P^N=\P H^0(H)$. Moreover,
$$\omega_X\cong \O_X, A^2=B^2=0, A\cdot B=1,H^2=18.$$
For each integer $m\geq2$, consider the automorphism
$$f_m:X\longrightarrow X,\qquad f_m(x,y)=(x+my,y)$$
and set
$$L_m=f_m^*(A-B).$$
Now, the K\"unneth formula gives
$$h^0(\O_X(A-B))=h^2(\O_X(A-B))=0 \ \hbox{and} \ h^1(\O_X(A-B))=1$$
and the same holds for $\O_X(B-A)$. Since pulling back by the automorphism $f_m$ preserves cohomology, we get that
\begin{equation}
\label{cl}
h^0(\pm L_m)=h^2(\pm L_m)=0, \ h^1(\pm L_m)=1.
\end{equation}
We next compute the intersections involving $L_m$.
Since $f_m$ is an automorphism,
$$L_m^2=(A-B)^2=-2.$$
Also, 
$$f_m^*B=B, f_m^*A\cdot A=m^2 \ \hbox{and} \ f_m^*A\cdot B=1.$$
Indeed, restriction of the first component $x+my$ to the curve
$A=\{O\}\times C$ is multiplication by $m$, of degree $m^2$, while restriction to $B=C\times\{O\}$ is the identity. Consequently,
\begin{equation}
\label{hl}
HL_m=3(A+B)(f_m^*A-f_m^*B)=3m^2.
\end{equation}
Define
$$\E_m=\O_X(L_m+H) \oplus \O_X(H-L_m)$$
so that $\E_m$ is rank $2$ with $c_1(\E_m)=2H$. By \eqref{cl},
$$h^0(\E_m(-1))=h^2(\E_m(-1))=0, h^1(\E_m(-1))=2$$
and it follows from \cite[Cor.~6.9]{ac} that $\E_m$ is an orientable non-ordinary instanton bundle of quantum
number $2$. On the other hand
$$h^1(\E_m(-2)) \ge h^1(L_m-H)) \ge -\chi(L_m-H))=-\frac12(L_m-H)^2=3m^2-8.$$
Now 
$$\mu_H(L_m+H)=18+3m^2>18=\mu_H(\E_m)$$
hence $\E_m$ is unstable. Finally, the Chern classes of $\E_m$ are independent of $m$:
$$c_1(\E_m)=2H, \deg c_2(\E_m)=(H+L_m)(H-L_m)=20.$$
\end{proof}

We now give an obstruction to the existence of non-ordinary $\mu$-semistable instanton bundles with given quantum number. We observe that the surface in Proposition \ref{no} does not satisfy the obstruction.

\begin{lemma}
\label{noinst}
Let $X \subseteq \P^N$ be a smooth irreducible variety of dimension $n \ge 2$ and let $H \in |\O_X(1)|$. Let $\E$ be a rank $r$ non-ordinary $\mu_H$-semistable instanton bundle on $X$ with quantum number $q$. Then
\begin{equation}
\label{dis1}
(n-2)H^n \ge \big(K_X^2-2c_2(X)\big)H^{n-2}-\frac{48q}{r}.
\end{equation}
\end{lemma}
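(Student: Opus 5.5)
The plan is to combine the Bogomolov inequality, which holds for $\mu_H$-semistable bundles in characteristic zero, with Riemann--Roch, using the instanton vanishings of Definition \ref{defi} to express the Chern classes of $\E$ through $q$. The term $K_X^2-2c_2(X)$ should appear from the degree-$(n-2)$ part of the Todd class, namely $\frac{1}{12}(c_1(X)^2+c_2(X))$. I would not restrict to a surface section, since $\mu$-semistability need not survive restriction to a hyperplane section. Instead I would use the form of Bogomolov valid on $X$ itself:
$$\big(2r c_2(\E)-(r-1)c_1(\E)^2\big)H^{n-2}\ge 0.$$

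First I would extract numerical data from the definition with $\delta=1$. For suitable twists $t$ (at least $t=-1$ and $t=1-n$, and with care the whole range $-n\le t\le -1$), conditions (i)--(iii) make all cohomology of $\E(t)$ vanish except possibly $h^1$ and $h^{n-1}$. This gives, for instance, $\chi(\E(-1))=-q+(-1)^{n-1}h^{n-1}(\E(-1))$, with the extra term killed by (ii) whenever $n\ge 3$, and the analogous formula $\chi(\E(1-n))=(-1)^{n-1}q$ from (iv). Condition (v) adds the symmetry $\chi(\E)=(-1)^n\chi(\E(-n))$.

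Next I would apply Hirzebruch--Riemann--Roch to the polynomial $P(t)=\chi(\E(t))$. Combining the values above with (v) via a second-difference or symmetric combination about $t=-\frac{n}{2}$, I expect most of the Todd-class contributions to cancel. What should remain is an identity of the form
$$\Big(c_2(\E)-\tfrac12 c_1(\E)^2+\text{(terms in }c_1(\E),K_X,H)\Big)H^{n-2}-\frac{r}{12}\big(K_X^2+c_2(X)\big)H^{n-2}+\dots=\text{(multiple of }q).$$
For $n=2$ this is plain Riemann--Roch on a surface: $\chi(\E(-1))=-q$ and $\chi(\E(-2))=\chi(\E)$, and the symmetry forces $c_1(\E)\cdot H$ to be the central value $r(K_X+nH)H^{n-1}/2$ or close to it. In general I expect the symmetry (v) to force $c_1(\E)H^{n-1}=\frac r2(K_X+nH)H^{n-1}$. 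The remaining contributions of $c_1(\E)^2$ would then be controlled by the Hodge index inequality $c_1(\E)^2H^{n-2}\cdot H^n\le (c_1(\E)H^{n-1})^2$. Applying it to the normalized class $c_1(\E)-\frac r2(K_X+nH)$, whose $H$-degree is zero, makes its square nonpositive, which is exactly the direction needed.

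Finally I would substitute the expression for $c_2(\E)H^{n-2}$ into Bogomolov and divide by $r$. The $q$-term then contributes $-48q/r$: the factor $12$ comes from the Todd denominators and the factor $2r$ from Bogomolov. The $K_X^2$ and $c_2(X)$ terms should combine to $K_X^2-2c_2(X)$ after using $c_1(X)^2=K_X^2$. The $H^n$ terms from the binomial expansion of $\chi(\E(t))$ at the chosen twists should give the coefficient $n-2$.

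The main obstacle is the bookkeeping in the middle step for general $n$. One must choose the right combination of twists so that all Chern data of $\E$ beyond $c_1$ and $c_2$ cancel. If that fails, the fallback is to restrict to a general complete intersection surface of sufficiently high degree, where Mehta--Ramanathan preserves semistability. One then runs the surface computation there, carrying the restricted instanton vanishings along via Koszul-type sequences, and rescales.
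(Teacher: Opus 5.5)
Your strategy is sound and differs from the paper's, but the decisive middle step is left as an expectation. Below is a comparison, followed by what closes that step.

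\textbf{Comparison with the paper.} The paper never computes on $X$ directly. Claim~\ref{primo} treats surfaces: a second difference plus (v) gives $\chi(\F)=\frac{fs}{2}-p$; then $\alpha H=0$ (quoted from \cite{ac}), Hodge index and $\Delta(\F)\ge 0$ finish. For $n\ge 3$ the paper restricts to a general surface section $S$, cites \cite{ac} for the fact that $\E_{|S}$ is a non-ordinary instanton of quantum number $2q$, and converts back via adjunction for $K_S^2$ and $c_2(S)$. Your reason for avoiding restriction does not apply. Claim~\ref{primo} needs only $\Delta(\E_{|S})\ge 0$, and $\Delta(\E_{|S})=\Delta(\E)H^{n-2}$ is exactly Bogomolov on $X$. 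So no semistability on $S$ is used. Your Mehta--Ramanathan fallback is therefore unnecessary, and it would make the instanton vanishings hard to transport, since they are relative to $H$ rather than $mH$.

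\textbf{Closing your middle step.} Your direct route does work, and no clever choice of twists is needed. The coefficient of $t^{n-2}$ in $P(t)=\chi(\E(t))$ is $\frac{1}{(n-2)!}[\mathrm{ch}(\E)\mathrm{td}(X)]_2H^{n-2}$, which involves only $c_1(\E)$ and $c_2(\E)$. For $n\ge 3$ the instanton data determine $P$ completely:
\begin{itemize}
\item (i)--(iii) give $P(t)=0$ for $2-n\le t\le -2$; this is where (iii) enters.
\item (iv) gives $P(-1)=-q$ and $P(1-n)=(-1)^{n-1}q$. The usable range is $1-n\le t\le -1$: at $t=-n$ the group $h^n$ is uncontrolled, and only (v) constrains $P(-n)$.
\item Adding (v) and the leading coefficient, one gets
\[
P(t)=\prod_{j=2}^{n-2}(t+j)\,\big(Au^3+Cu\big),\qquad u=t+\tfrac n2,\quad A=\tfrac{rd}{n!},\quad C=-\tfrac{2q}{(n-2)!}-A\tfrac{(n-2)^2}{4}.
\]
\end{itemize}
Reading off coefficients gives $c_1(\E)H^{n-1}=\frac r2(K_X+nH)H^{n-1}$. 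For $n\ge 3$ this uses all the data, not (v) alone. It also gives $[\mathrm{ch}(\E)\mathrm{td}(X)]_2H^{n-2}=\frac{rn^2}{8}d-\frac{r(n-2)}{24}d-2q$. Substitute this into
\[
2r[\mathrm{ch}(\E)\mathrm{td}(X)]_2H^{n-2}=\alpha^2H^{n-2}+\tfrac{r^2n^2}{4}d-\tfrac{r^2}{12}\big(K_X^2-2c_2(X)\big)H^{n-2}-\Delta(\E)H^{n-2},
\]
with $\alpha=c_1(\E)-\frac r2(K_X+nH)$. Using $\alpha^2H^{n-2}\le 0\le \Delta(\E)H^{n-2}$ gives exactly the lemma.

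Note the $-2q$. Your accounting (``$12$ from Todd, $2r$ from Bogomolov'') only explains $24q/r$. The extra factor $2$, the counterpart of the paper's $p=2q$, comes out of this polynomial. For $n=2$ one gets $\frac{rd}{2}-q$ and the sharper bound of Claim~\ref{primo}.

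In short, your route is self-contained, using only HRR, Bogomolov and Hodge index, and it avoids the two results quoted from \cite{ac}. The paper's route is shorter given those results.
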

\begin{proof}
Set $d=H^n$. We start by proving something a little stronger than \eqref{dis1} for $n=2$.

\renewcommand{\proofname}{Proof}
\begin{claim}
\label{primo}
Let $S \subseteq \P^N$ be a smooth irreducible surface. Let $\F$ be a rank $f$ non-ordinary instanton bundle on $S$ with quantum number $p$. If $\Delta(\F) \ge 0$, then
\begin{equation}
\label{dis3}
K_S^2-2c_2(S) \le \frac{24p}{f}.
\end{equation}
\end{claim}
\begin{proof} 
Set $s=\deg S$. We first show that
\begin{equation}
\label{chiis1}
\chi(\F)=\frac{fs}{2}-p.
\end{equation}
In fact, by Riemann-Roch, the Hilbert polynomial of $\F$ is
$$P(\ell):=\chi(\F(\ell))=\frac{fs}{2}\ell^2+\big(c_1(\F)H-\frac{f}{2}K_SH\big)\ell+\chi(\F).$$
For a quadratic polynomial $P(t)=At^2+Bt+C$, its second difference is
$$P(0)-2P(-1)+P(-2)=C-2(A-B+C)+(4A-2B+C)=2A$$
consequently
\begin{equation}
\label{diff1}
\chi(\F)-2\chi(\F(-1))+\chi(\F(-2))=fs.
\end{equation}
Now, the non-ordinary instanton conditions give $H^0(\F(-1))=H^2(\F(-1))=0$ and $h^1(\F(-1))=p$, hence $\chi(\F(-1))=-p$. They also impose $\chi(\F)=\chi(\F(-2))$, and therefore, replacing in \eqref{diff1}, we get
$$2\chi(\F)+2p=fs$$
hence \eqref{chiis1}. Set now
$$\alpha = c_1(\F)-\frac{f}{2}(K_S+2H)$$
so that $\alpha H=0$ by \cite[Thm.~1.6]{ac}, hence $\alpha^2 \le 0$ by the Hodge index theorem. Now, Riemann-Roch gives, using \eqref{chiis1},
$$\begin{aligned}
0 \le \Delta(\F) & =\big(c_1(\F)-\frac{f}{2}K_S\big)^2+\frac{f^2}{4}\big(8\chi(\O_S)-K_S^2\big)-2f\chi(\F)= \\
& \alpha^2+\frac{f^2}{4}\big(8\chi(\O_S)-K_S^2\big)+2fp \le \frac{f^2}{4}\big(8\chi(\O_S)-K_S^2\big)+2fp
\end{aligned}$$
hence
\begin{equation}
\label{dis2}
K_S^2-2c_2(S)=3\big(K_S^2-8\chi(\O_S)\big) \le \frac{24p}{f}
\end{equation}
hence \eqref{dis3} holds.
\end{proof} 
\renewcommand{\proofname}{Proof}

We continue the proof of Lemma \ref{noinst}. 

Since $\E$ is $\mu_H$-semistable, it follows by Bogomolov's inequality that $\Delta(\E)H^{n-2}  \ge 0$. 

Now, when $n=2$, applying Claim \ref{primo} to $\E$, we have \eqref{dis3}, hence also \eqref{dis1}.

Suppose next that $n \ge 3$ and consider $\E'=\E_{|S}$, where $S=X \cap H_1 \cap \ldots \cap H_{n-2}$, with $H_j \in |H|, 1 \le j \le n-2$ general. It follows by \cite[Cor.~4.5]{ac} that $\E'$ is a rank $r$ non-ordinary instanton bundle on $S$ with quantum number $p=2q$. Moreover
$$\Delta(\E')=2rc_2(\E_{|S})-(r-1)c_1(\E_{|S})^2=\big(2rc_2(\E)-(r-1)c_1(\E)^2\big)H^{n-2}=\Delta(\E)H^{n-2} \ge 0.$$
Therefore Claim \ref{primo} applies to $\E'$, and we deduce that
\begin{equation}
\label{s2}
K_S^2-2c_2(S) \le \frac{48q}{r}.
\end{equation}
On the other hand, $K_S=(K_X+(n-2)H)_{|S}$, hence
$$K_S^2=K_X^2H^{n-2}+(n-2)^2d+2(n-2)K_XH^{n-1}$$
and (see for example \cite[Proof of Lemma 3.2]{lr2}),
$$c_2(S)=c_2(X) H^{n-2}+(n-2) K_X H^{n-1} + \binom{n-1}{2}d$$
so that \eqref{s2} becomes
$$K_X^2H^{n-2}+(n-2)^2d-2c_2(X) H^{n-2}-2\binom{n-1}{2}d \le \frac{48q}{r}$$
or, equivalently,
$$(n-2)H^n=\left(-(n-2)^2+2\binom{n-1}{2}\right)d \ge \big(K_X^2-2c_2(X)\big)H^{n-2}-\frac{48q}{r}.$$
\end{proof}

\section{An obstruction to the existence of bundles with some regularity properties}

In this section we prove Theorem \ref{main1}.

\renewcommand{\proofname}{Proof of Theorem \ref{main1}}
\begin{proof}
We first show that the theorem holds when $n=2$. Hence we assume that $\E$ is $a$-regular and $\E^*(K_X+3H)$ is $b$-regular. Set
$$r=\rk (\E),\qquad d=H^2,\qquad k=K_X H, \qquad \mu=\mu_H(\E)$$
and, as usual,
$$\Delta(\E)=2rc_2(\E)-(r-1)c_1(\E)^2.$$
Consider a finite linear projection $\pi: X \to \P^2$. We have $\pi^*\O_{\P^2}(1)=\O_X(1)$ and $\pi$ is flat by \cite[Ex.~III.9.3(a)]{h}, since it is finite and surjective. Therefore 
$$\G=\pi_*\E$$ 
is a vector bundle on $\P^2$ by Grauert's criterion \cite[Cor.~III.12.9]{h}. Moreover, finiteness and the projection formula give
\begin{equation}
\label{leray}
H^i(\G(t)) \cong H^i(\E(t)) \ \hbox{for all} \ t \in \Z, i \ge 0. 
\end{equation}
Since $\E$ is $a$-regular, we have that $H^1(\E(a-1))=H^2(\E(a-2))=0$, hence, using \eqref{leray}, $H^1(\G(a-1))=H^2(\G(a-2))=0$, so that $\G(a)$ is $0$-regular and therefore globally generated. Similarly, $h^1(\G^*(b-1))=h^1(\G(-2-b))=h^1(\E(-2-b))=h^1(\E^*(K_X+(b+2)H)=0$ and $h^2(\G^*(b-2))=h^0(\G(-b-1))=h^0(\E(-b-1))=h^2(\E^*(K_X+(b+1)H)=0$, hence $\G^*(b)$ is also $0$-regular, hence globally generated. Then we have a surjection $\O_{\P^2}^{\oplus \beta} \to \G^*(b)$ and therefore an inclusion
\begin{equation}
\label{g1}
\G \subseteq \O_{\P^2}(b)^{\oplus \beta}.
\end{equation}
Consider the Harder-Narashiman filtration
$$0=\E_0 \subset \E_1 \subset \ldots \subset \E_m=\E$$
with $\mu_H$-semistable quotients $\F_i = \E_i / \E_{i-1}$, for $1 \le i \le m$. 

We will now divide the proof into several claims.
\renewcommand{\proofname}{Proof}
\begin{claim} 
\label{slopes}
Let $h=\O_{\P^2}(1)$. Then $\mu_h(\pi_* \F_1) \le b$ and $\mu_h(\pi_* \F_m) \ge -a$.
\end{claim}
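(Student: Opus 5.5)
The plan is to use the two global generation statements from the setup: $\G(a)$ is globally generated on $\P^2$, and $\G \subseteq \O_{\P^2}(b)^{\oplus \beta}$ by \eqref{g1}. Both bounds come from the exactness and finiteness of $\pi_*$, together with the standard slope behaviour of subsheaves and quotients of these bundles on $\P^2$.

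\emph{Bound $\mu_h(\pi_*\F_1) \le b$.} Since $\F_1 = \E_1 \subseteq \E$ and $\pi$ is finite, $\pi_*$ is exact. Hence $\pi_*\F_1$ is a nonzero subsheaf of $\G$, and therefore of $\O_{\P^2}(b)^{\oplus \beta}$. This sheaf is semistable of slope $b$, so every nonzero subsheaf has $h$-slope at most $b$. Concretely, a torsion-free subsheaf of rank $s$ has $\det$ mapping nonzero to some $\wedge^s$ of the ambient sheaf, that is, to a direct sum of copies of $\O(sb)$. This gives $c_1 \le sb$.

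\emph{Bound $\mu_h(\pi_*\F_m) \ge -a$.} Here $\F_m = \E/\E_{m-1}$ is a quotient of $\E$. Exactness gives a surjection $\G \to \pi_*\F_m$, so $\pi_*\F_m(a)$ is a quotient of the globally generated sheaf $\G(a)$ and is itself globally generated. A globally generated torsion-free sheaf $\mathcal Q$ on $\P^2$ of rank $s$ has $c_1(\mathcal Q) \ge 0$: choosing $s$ general sections gives $\O^{\oplus s} \to \mathcal Q$, which is generically injective, so $\det$ has a nonzero section. Thus $\mu_h(\pi_*\F_m(a)) \ge 0$, that is, $\mu_h(\pi_*\F_m) \ge -a$.

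Torsion-freeness holds because $\F_i$ are vector bundles or at least torsion-free on $X$, and the pushforward under a finite flat map of a torsion-free sheaf is torsion-free. Ranks multiply by $\deg \pi = d$, which is harmless.

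The only subtle point is the first bound. One must make sure that the argument uses only the inclusion into a semistable bundle of slope $b$, and not any semistability of $\pi_*\F_1$ itself. The determinant argument above handles this directly, so I expect no real obstacle.
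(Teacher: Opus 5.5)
Your proposal is correct and follows the paper's argument essentially step for step. You push forward the sub/quotient sequences via the exact functor $\pi_*$. For the first bound you use \eqref{g1} and the semistability of $\O_{\P^2}(b)^{\oplus\beta}$, and for the second you use that $(\pi_*\F_m)(a)$ is a quotient of the globally generated $\G(a)$. Your determinant and general-sections arguments just make explicit the standard slope facts that the paper uses without comment.
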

\begin{proof} 
Since $\F_1 = \E_1 \subset \E$, we have an exact sequence
$$0 \to \F_1 \to \E \to \E/\F_1 \to 0$$
and, being $R^1 \pi_* \F_1=0$ because $\pi$ is finite, we get an exact sequence
$$0 \to \pi_* \F_1 \to \G \to \pi_*(\E/\F_1) \to 0.$$
Combining with \eqref{g1}, we find an inclusion
$$\pi_* \F_1 \subseteq \O_{\P^2}(b)^{\oplus \beta}$$
and, since $\O_{\P^2}(b)^{\oplus \beta}$ is $\mu_h$-semistable with $\mu_h(\O_{\P^2}(b)^{\oplus \beta})=b$, we have $\mu_h(\pi_* \F_1) \le b$. 

The exact sequence
$$0 \to \E_{m-1} \to \E \to \F_m \to 0$$
gives rise, as above, to an exact sequence
$$0 \to \pi_* \E_{m-1} \to \G \to \pi_*\F_m \to 0$$
showing that $(\pi_*\F_m)(a)$ is globally generated, because $\G(a)$ is. Thus,
$$0 \le \mu_h((\pi_* \F_m)(a))=\mu_h(\pi_* \F_m)+a$$
and therefore $\mu_h(\pi_* \F_m) \ge -a$.
This proves Claim \ref{slopes}.
\end{proof} 
\renewcommand{\proofname}{Proof}

We continue the proof of Theorem \ref{main1}.

\begin{claim} 
\label{slopes2}
We have $\mu \le \mu_{max}(\E) \le \frac{k+3d+2bd}{2}$ and $\mu \ge \mu_{min}(\E) \ge \frac{k+3d-2ad}{2}$.
\end{claim}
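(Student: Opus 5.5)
The plan is to convert the slope bounds on $\P^2$ from Claim \ref{slopes} into slope bounds on $X$. To do this I will compute $\mu_h(\pi_*\F)$ for an arbitrary vector bundle $\F$ on $X$ using only Riemann--Roch on both sides and the cohomological identity coming from finiteness of $\pi$.

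Let $\F$ be a vector bundle of rank $f$ on $X$. By the same reasoning used for \eqref{leray}, the sheaf $\pi_*\F$ is a vector bundle on $\P^2$. Its rank is $fd$, since $\pi$ is finite flat of degree $d=H^2$. Finiteness and the projection formula, together with $\pi^*\O_{\P^2}(1)=\O_X(1)$, give
\[
\chi(\pi_*\F(t))=\chi(\F(t)) \quad \text{for all } t\in\Z .
\]
I will compare the linear coefficients of these two Hilbert polynomials.

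On $\P^2$, Riemann--Roch for a bundle of rank $fd$ gives
\[
\chi(\pi_*\F(t))=\frac{fd}{2}t^2+\Big(c_1(\pi_*\F)+\frac{3fd}{2}\Big)t+\chi(\pi_*\F).
\]
On $X$, as in the proof of Claim \ref{primo},
\[
\chi(\F(t))=\frac{fd}{2}t^2+\Big(c_1(\F)H-\frac f2 k\Big)t+\chi(\F).
\]
Equating the coefficients of $t$ yields
\[
c_1(\pi_*\F)=c_1(\F)H-\frac{f(k+3d)}{2},
\qquad\text{hence}\qquad
\mu_h(\pi_*\F)=\frac{1}{d}\Big(\mu_H(\F)-\frac{k+3d}{2}\Big).
\]
This step is the only computation needed. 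Its one real point is that $c_1$ on $\P^2$ is read off from the Hilbert polynomial, which is legitimate because the two Hilbert polynomials agree.

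Next apply this formula to $\F_1$ and $\F_m$. Claim \ref{slopes} gives $\mu_h(\pi_*\F_1)\le b$ and $\mu_h(\pi_*\F_m)\ge -a$, so
\[
\mu_H(\F_1)\le bd+\frac{k+3d}{2}=\frac{k+3d+2bd}{2},
\qquad
\mu_H(\F_m)\ge \frac{k+3d-2ad}{2}.
\]
Finally, by the defining properties of the Harder--Narasimhan filtration, $\mu_{max}(\E)=\mu_H(\F_1)$ and $\mu_{min}(\E)=\mu_H(\F_m)$. Moreover $\mu_{min}(\E)\le\mu\le\mu_{max}(\E)$, since $\mu$ is a weighted average of the slopes $\mu_H(\F_i)$, with weights the ranks, and these slopes decrease in $i$. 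This gives both chains of inequalities in the claim.
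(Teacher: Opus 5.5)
This is essentially the paper's argument: compare the Hilbert polynomials of $\F$ and $\pi_*\F$ via Riemann--Roch, read off $\mu_h(\pi_*\F)=\mu_H(\F)/d-(k+3d)/(2d)$, and then use Claim~\ref{slopes} together with $\mu_{min}(\E)\le\mu\le\mu_{max}(\E)$. One fix is needed: run the computation for torsion-free sheaves rather than vector bundles, as the paper does. The Harder--Narasimhan factors, in particular $\F_m=\E/\E_{m-1}$, are in general only torsion-free, so neither $\F_m$ nor $\pi_*\F_m$ need be locally free. This costs nothing, because Riemann--Roch, the rank count $\mathrm{rk}(\pi_*\F)=fd$, and the identification $H^i(\F(t))\cong H^i((\pi_*\F)(t))$ all hold for arbitrary coherent sheaves.
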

\begin{proof} 
For every torsion-free sheaf $\F$ of rank $f$ on $X$, we have, as in \eqref{leray}, $H^i(\F(\ell)) \cong H^i((\pi_* \F)(\ell))$
for all $\ell \in \Z, i \ge 0$ and therefore $\F$ and $\pi_* \F$ have the same Hilbert polynomial. Now, there are two constants, $u, v$ such that
$$\chi(\F(\ell))=\frac{fd}{2}\ell^2+(c_1(\F)H-\frac{fk}{2})\ell+u$$
and
$$\chi((\pi_* \F)(\ell))=\frac{\rk(\pi_* \F)}{2}\ell^2+\big(c_1(\pi_* \F)h+\frac{3\rk(\pi_* \F)}{2}\big)\ell+v$$
and equality of Hilbert polynomials yields
$$\rk(\pi_* \F)=fd, \qquad c_1(\pi_* \F)h=c_1(\F)H-\frac{f}{2}(k+3d)$$
hence
$$\mu_h(\pi_* \F)=\frac{\mu_H(\F)}{d}-\frac{k+3d}{2d}.$$
It follows from Claim \ref{slopes} that
$$\mu \le \mu_{max}(\E)=\mu_H(\F_1)=d\mu_h(\pi_* \F_1)+\frac{k+3d}{2} \le \frac{k+3d+2bd}{2}$$
and that
$$\mu \ge \mu_{min}(\E)=\mu_H(\F_m)=d\mu_h(\pi_* \F_m)+\frac{k+3d}{2} \ge \frac{k+3d-2ad}{2}.$$
This proves Claim \ref{slopes2}.
\end{proof} 
\renewcommand{\proofname}{Proof}

\begin{claim} 
\label{inp2}
$a+b \ge 0$.
\end{claim}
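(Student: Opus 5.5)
The inequality $a+b \ge 0$ should follow directly from Claim \ref{slopes2}. That claim sandwiches the slope $\mu=\mu_H(\E)$ between two explicit bounds:
$$\frac{k+3d-2ad}{2} \le \mu_{min}(\E) \le \mu \le \mu_{max}(\E) \le \frac{k+3d+2bd}{2}.$$
Comparing the two ends gives
$$k+3d-2ad \le k+3d+2bd,$$
that is, $2(a+b)d \ge 0$. Since $H$ is very ample, $d=H^2>0$, and dividing yields $a+b \ge 0$.

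The only point to check is that Claim \ref{slopes2} has been established with no hidden assumptions. It uses the Harder--Narasimhan filtration of $\E$, which always exists, and the trivial inequalities $\mu_{min}(\E)\le \mu \le \mu_{max}(\E)$. So the claim is immediate.

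Alternatively, one could argue on $\P^2$ with $\G=\pi_*\E$. By \eqref{g1} we have $\G\subseteq \O_{\P^2}(b)^{\oplus\beta}$, so $\mu_h(\G)\le b$. Since $\G(a)$ is globally generated, $\mu_h(\G)\ge -a$. Together these give $-a\le b$ again.

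There is no real obstacle. I would present the first, one-line deduction from Claim \ref{slopes2}, since that claim is already available.
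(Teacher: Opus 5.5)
Your proof is correct. Claim \ref{slopes2} is established before, and independently of, Claim \ref{inp2}, so comparing its two outer bounds gives $2(a+b)d\ge 0$ with $d=H^2>0$. This tacitly needs $\E\neq 0$, exactly as the paper does.

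The paper's route looks different but is the same argument. It sets $\H=\E^*(K_X+(4-a)H)$ and $\F=(\pi_*\H)^*$, and checks by Serre duality that $\F$ is $1$-regular and $\F^*$ is $(a+b-1)$-regular. It then reruns the argument of Claim \ref{slopes} on the Harder--Narasimhan filtration of $\F$ to get $-1\le\mu_{min}(\F)\le\mu_{max}(\F)\le a+b-1$. However, by duality for the finite flat map $\pi$, with $\omega_{X/\P^2}=\O_X(K_X+3H)$, one has $\F\cong\pi_*(\E(a-1))=\G(a-1)$. So the paper's argument is exactly your alternative argument on $\P^2$, namely $-a\le\mu_h(\G)\le b$, normalized by a twist by $a-1$.

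Both proofs therefore use one mechanism: global generation bounds the slope from below, and the inclusion \eqref{g1} bounds it from above. Your version notes that Claim \ref{inp2} is already a formal consequence of Claims \ref{slopes} and \ref{slopes2}. That is the more economical presentation; the paper's detour through $\H$ adds nothing beyond it.
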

\begin{proof} 
Set $\H=\E^*(K_S+(4-a)H)$ and $\F=(\pi_* \H)^*$. As in \eqref{leray}, we have that $H^j(\H(\ell)) \cong H^j(\F^*(\ell))$
for all $\ell \in \Z, j \ge 0$. We first prove that $\F$ is is $1$-regular. Indeed, for $i>0$ we have, by Serre's duality
$$h^i(\F(1-i))=h^{2-i}(\F^*(i-4))=h^{2-i}(\H(i-4))=h^{2-i}(\E^*(K_S+(i-a)H))=h^i(\E(a-i))=0$$
since $\E$ is $a$-regular. Now, let us see that $\F^*$ is $a+b-1$-regular. In fact, we have
$$h^i(\F^*(a+b-1-i))=h^i(\H(a+b-1-i))=h^i(\E^*(K_S+(b+3-i)H))=0$$
since $\E^*(K_S+3H)$ is $b$-regular.
Considering the Harder-Narashiman filtration of $\F$, with the same proof as Claim \ref{slopes}, we have that
$$-1 \le \mu_{min}(\F) \le \mu_{max}(\F) \le a+b-1$$
hence $a+b \ge 0$. This proves Claim \ref{inp2}.
\end{proof} 
\renewcommand{\proofname}{Proof}

To continue with the proof of Theorem \ref{main1}, we now apply the Langer-Bogomolov inequality \cite[Prop.~1.3.2]{la}
$$d \Delta(\E)+r^2\big(\mu_{\max}(\E)-\mu \big) \big(\mu-\mu_{\min}(\E)\big) \ge 0$$
to get, by Claim \ref{slopes2},
\begin{equation}
\label{bl}
4d \Delta(\E) \ge -4r^2\big(\mu_{\max}(\E)-\mu \big) \big(\mu-\mu_{\min}(\E)\big) \ge -r^2\big(k+3d+2bd-2\mu\big)\big(2\mu-k-3d+2ad\big).
\end{equation}
Since $H^1(\E(a-1))=0$, we have that $\chi(\E(a-1)=h^0(\E(a-1))+h^2(\E(a-1)) \ge 0$, hence Riemann-Roch gives 
\begin{equation}
\label{chi}
\begin{aligned}
0 \le r\chi(\E(a-1))= & r^2\chi(\O_X)-\frac{r^2K_X^2}{8}+\frac{1}{2}\left(c_1(\E(a-1))-\frac{rK_X}{2}\right)^2-\frac{\Delta(\E(a-1))}{2} = \\ 
& r^2\chi(\O_X)-\frac{r^2K_X^2}{8}+\frac{1}{2}\left(c_1(\E(a-1))-\frac{rK_X}{2}\right)^2-\frac{\Delta(\E)}{2}
\end{aligned}
\end{equation}
By the Hodge index theorem we have
$$\left(c_1(\E(a-1))-\frac{rK_X}{2}\right)^2 \le \frac{\left((c_1(\E(a-1))-\frac{rK_X}{2})H\right)^2}{d} = \frac{r^2\big(\mu+ad-d-\frac{k}{2}\big)^2}{d}$$
and we obtain from \eqref{chi} and \eqref{bl} that 
$$0 \le r^2\chi(\O_X)-\frac{r^2K_X^2}{8}+ \frac{r^2\big(\mu+ad-d-\frac{k}{2}\big)^2}{2d}+\frac{r^2\big(k+3d+2bd-2\mu\big)\big(2\mu-k-3d+2ad\big)}{8d}$$
and therefore
\begin{equation}
\label{chi2}
\begin{aligned}
0 \le & 8\chi(\O_X)-K_X^2+\frac{4\big(\mu+ad-d-\frac{k}{2}\big)^2+\big(k+3d+2bd-2\mu\big)\big(2\mu-k-3d+2ad\big)}{d}= \\
& 8\chi(\O_X)-K_X^2+2(a+b+1)(2 \mu-k)+d(4a^2+ 4 a b-2a-6b-5).
\end{aligned}
\end{equation}
By Claim \ref{slopes2} we have
\begin{equation}
\label{mun}
2\mu-k \le (3+2b)d.
\end{equation}
Then \eqref{chi2} and \eqref{mun} give
$$0 \le 8\chi(\O_X)-K_X^2+2(a+b+1)(3+2b)d+d(4a^2+4ab-2a-6b-5)=8\chi(\O_X)-K_X^2+(2a+2b+1)^2d$$
and then
\begin{equation}
\label{h1}
H^2=d \ge \frac{1}{(2a+2b+1)^2}\big(K_X^2-8\chi(\O_X)\big)=\frac{1}{3(2a+2b+1)^2}\big(K_X^2-2c_2(X)\big).
\end{equation}
This proves the theorem when $n=2$.

Now we deal with dimension $n \ge 3$. Let $S=X \cap H_1 \cap \ldots \cap H_{n-2}$ be a smooth surface section, with $H_j \in |H|, 1 \le j \le n-2$. As $m$-regularity descends to hyperplane sections, we have that
$\E_{|S}$ is $a$-regular and $\E_{|S}^*(K_S+3H_{|S})=\E^*(K_X+(n+1)H)_{|S}$ is $b$-regular. Set
$$c = 3(2a+2b+1)^2.$$
Therefore, by the case $n=2$ and observing that $d=H^n=H_{|S}^2$, we have that 
\begin{equation}
\label{sus}
cH^n=cd \ge K_S^2-2c_2(S).
\end{equation}
On the other hand, $K_S=(K_X+(n-2)H)_{|S}$, hence
$$K_S^2=K_X^2H^{n-2}+(n-2)^2d+2(n-2)K_XH^{n-1}$$
and (see for example \cite[Proof of Lemma 3.2]{lr2}),
$$c_2(S)=c_2(X) H^{n-2}+(n-2) K_X H^{n-1} + \binom{n-1}{2}d$$
so that \eqref{sus} becomes
$$cd \ge K_X^2H^{n-2}+(n-2)^2d-2c_2(X) H^{n-2}-2 \binom{n-1}{2}d$$
or, equivalently,
$$(n-2+c)H^n=\left(c-(n-2)^2+2\binom{n-1}{2}\right)d \ge \big(K_X^2-2c_2(X)\big)H^{n-2}$$
and the theorem is proved. 
\end{proof}
\renewcommand{\proofname}{Proof}

\section{Proof of Corollary \ref{nonext}, Theorem \ref{main2} and Corollary \ref{inst}}

We start with Corollary \ref{nonext}.

\renewcommand{\proofname}{Proof of Corollary \ref{nonext}}
\begin{proof}
It follows by Lemma \ref{1-1}, that a bundle $\E$ is weakly Ulrich for $(X,\O_X(1))$ if and only if both $\E$ and $\E^*(K_X+(n+1)H)$ are $1$-regular with respect to $\O_X(1)$. Hence, if there exists a weakly Ulrich bundle for $(X,\O_X(1))$, Theorem \ref{main1} gives that 
$$H^n \ge \frac{1}{n+73}\big(K_X^2-2c_2(X)\big)H^{n-2}$$
contradicting \eqref{dis}. This proves the first assertion of the Corollary.

Finally, exactly as in \cite[Proof of Lemma 3.1]{lo1}, \eqref{dis} is equivalent to the corresponding one on a smooth hyperplane section of $X$. Therefore, there are no weakly Ulrich bundles on all successive smooth hyperplane sections $(X_i,H_{|X_i}), 2 \le i \le n-1$.
\end{proof}
\renewcommand{\proofname}{Proof}

Next, we prove Theorem \ref{main2}.

\renewcommand{\proofname}{Proof of Theorem \ref{main2}}
\begin{proof}
Set $Y=Y_k$ and $L=L_k$. By \cite[Thm.~1.2]{a1} we have that 
$$L^2=7k^9 \ \hbox{and} \ K_Y^2-2c_2(Y)=3\sigma(Y)=3(3k^2-11)k^9>0.$$
Therefore, \eqref{dis} for $(Y_k,L_k)$ is equivalent to
$$\frac{L^2}{\sigma(Y)}=\frac{7}{3k^2-11} <\frac{1}{25}$$
which holds for $k \ge 8$. Thus, there are no weakly Ulrich bundles for $(Y_k,L_k), k \ge 8$ by Corollary \ref{nonext}. 

As for $X=Y \times Z$, exactly with the same computation in \cite[Proof of Thm.~1]{lo1} we see that, when $n \ge 4$, \eqref{dis} is equivalent to
\begin{equation}
\label{f1}
\left((n+73)\binom{n}{2}H_Z^{n-2}-\binom{n-2}{2}K_Z^2H_Z^{n-4}+2\binom{n-2}{2}c_2(Z)H_Z^{n-4}\right)L^2 < H_Z^{n-2}\big(K_Y^2-2c_2(Y)\big).
\end{equation}
Set
$$g(n,Z,H_Z)=(n+73)\binom{n}{2}H_Z^{n-2}-\binom{n-2}{2}K_Z^2H_Z^{n-4}+2\binom{n-2}{2}c_2(Z)H_Z^{n-4}$$
so that \eqref{f1} becomes
\begin{equation}
\label{f2}
g(n,Z,H_Z)L^2 < H_Z^{n-2}\big(K_Y^2-2c_2(Y)\big).
\end{equation}
Therefore, if $g(n,Z,H_Z) \le 0$, then certainly \eqref{f2} holds. On the other hand, if $g(n,Z,H_Z)>0$, \eqref{f2} is equivalent to
$$\frac{L^2}{\sigma(Y)}=\frac{7}{3k^2-11} <\frac{3H_Z^{n-2}}{g(n,Z,H_Z)}$$
that certainly holds for $k \gg 0$.

Similarly, if $n=3$, as in \cite[Proof of Thm.~1]{lo1}, we see that \eqref{dis} is holds if and only if
$$L^2 < \frac{\sigma(Y)}{76}$$
that is to
$$\frac{L^2}{\sigma(Y)}=\frac{7}{3k^2-11} <\frac{1}{76}$$
that certainly holds for $k \ge 14$.

Therefore, in all cases, \eqref{dis} is satisfied and we conclude by Corollary \ref{nonext} that $(X, H)$ and all its successive hyperplane sections until dimension $2$, do not carry a weakly Ulrich vector bundle.

As for $(X', H')$, exactly with the same computation in \cite[Proof of Thm.~1]{lo1}, we see that \eqref{dis} holds if and only if
$$(n+72+s^2)H^n<\big(K_X^2-2c_2(X)\big)H^{n-2}.$$
Again, as in \cite[Proof of Thm.~1]{lo1}, the latter holds for $k \gg 0$ and then $(X', H')$ does not carry a weakly Ulrich vector bundle by Corollary \ref{nonext}.
\end{proof}
\renewcommand{\proofname}{Proof}

\begin{remark}
\label{noab}
A simple application of Theorem \ref{main1} actually shows that, for $k \gg a, b$, the pairs $(X, H)$, $(X', H')$, and all their successive hyperplane sections until dimension $2$, do not carry any bundle $\E$ such that $\E$ is $a$-regular and $\E^*(K_X+(n+1)H)$ is $b$-regular. 
\end{remark}

We now prove Corollary \ref{inst}.

\renewcommand{\proofname}{Proof of Corollary \ref{inst}}
\begin{proof}
Given a smooth projective variety $X \subset \P^N$ and an ordinary instanton bundle $\E$ on $X$ with quantum number $q \le q_0$, it follows by Remark \ref{regi} that both $\E$ and $\E^*(K_X+(n+1)H)$ are $q_0$-regular. Now the corollary follows, in the ordinary  instanton case, by Remark \ref{noab}. Assume instead that $\E$ is a bundle with supernatural cohomology and roots of the Hilbert polynomial lying in the interval $[\gamma_0,\gamma_1]$, that is the roots $z_1 > \ldots > z_n$ satisfy $z_1 \le \gamma_1$ and $z_n \ge \gamma_0$. It follows from \cite[Thm. 6.4]{es} that $\E$ is $(z_1+1)$-regular, hence also $(\gamma_1+1)$-regular and $\E^*(K_X+(n+1)H)$ is $(-z_n-n)$-regular, hence also $(-\gamma_0-n)$-regular. Again, the corollary follows, in the supernatural case, by Remark \ref{noab}. 

Finally, assume that $\E$ is a rank $r$ non-ordinary $\mu$-semistable instanton bundle on $X \subset \P^N$ with quantum number $q \le q_0$. Then
$$(n-2)H^n \ge \big(K_X^2-2c_2(X)\big)H^{n-2}-\frac{48q}{r}$$
by Lemma \ref{noinst}. Therefore we will be done if we show that, in all three cases $(Y_k,L_k), (X, H), (X', H')$, the inequality
$$(n-2)H^n < \big(K_X^2-2c_2(X)\big)H^{n-2}-48q_0$$
holds. But the latter follows exactly as in the proof of Theorem \ref{main2}, by replacing $n+73$ with $n-2$. This completes the proof of Corollary \ref{inst}. 
\end{proof}
\renewcommand{\proofname}{Proof}

\end{document}